\theoremstyle{plain}
\newtheorem{thm}{\protect\theoremname}
\theoremstyle{definition}
\newtheorem{problem}[thm]{\protect\problemname}
\theoremstyle{remark}
\newtheorem{rem}[thm]{\protect\remarkname}
\newtheorem{theorem}{Theorem}[section]
\newtheorem{lemma}[theorem]{Lemma}
\newtheorem{condition}{Condition}
\newtheorem{assumption}{Assumption}
\theoremstyle{definition}
\newtheorem{definition}[theorem]{Definition}
\providecommand{\problemname}{Problem}
\providecommand{\remarkname}{Remark}
\providecommand{\theoremname}{Theorem}
\providecommand{\problemname}{Problem}
\providecommand{\remarkname}{Remark}
\providecommand{\theoremname}{Theorem}
\numberwithin{equation}{section}
\DeclareMathOperator{\Id}{Id}
\DeclareMathOperator{\Fix}{Fix}
\DeclareMathOperator*{\argmin}{argmin}
\DeclareMathOperator{\modul}{mod}
\begin{document}
\setcounter{page}{1}

\vspace*{2.0cm}
\title[Solving nonlinear equations]
{Approaches to iterative algorithms for solving nonlinear equations
with an application in tomographic absorption spectroscopy}
\author[F. J. Aragón-Artacho et al.]{Francisco J. Aragón-Artacho$^{1}$, Weiwei Cai$^{2}$, Yair Censor$^{3}$, Aviv Gibali$^{4,*}$, Chongyuan Shui$^{2}$ and David Torregrosa-Belén$^{1}$}
\maketitle
\vspace*{-0.6cm}

\begin{center}
{\footnotesize

$^{1}$Department of Mathematics, University of Alicante, Alicante,
Spain\\
$^{2}$Department of Mechanical Engineering, Shanghai Jiao Tong University,
Shanghai, China\\
$^{3}$Department of Mathematics, University of Haifa, Haifa, Israel\\
$^{4}$Applied Mathematics Department, HIT Holon Institute of Technology,
Holon, Israel\\ \bigskip

\textbf{This work is dedicated to the memory of Professor Hedy Attouch}
}\end{center}

\vskip 4mm {\footnotesize \noindent {\bf Abstract.}
In this paper we propose an approach for solving systems of nonlinear
equations without computing function derivatives. Motivated by the
application area of tomographic absorption spectroscopy, which is
a highly-nonlinear problem with variables coupling, we consider a
situation where straightforward translation to a fixed point problem
is not possible because the operators that represent the relevant
systems of nonlinear equations are not self-mappings, i.e., they operate
between spaces of different dimensions.

To overcome this difficulty we suggest an ``alternating common fixed
points algorithm'' that acts alternatingly on the different vector
variables. This approach translates the original problem to a common
fixed point problem for which iterative algorithms are abound and
exhibits a viable alternative to translation to an optimization problem,
which usually requires derivatives information. However, to apply
any of these iterative algorithms requires to ascertain the conditions
that appear in their convergence theorems.

To circumvent the need to verify conditions for convergence, we propose
and motivate a derivative-free algorithm that better suits the tomographic
absorption spectroscopy problem at hand and is even further improved
by applying to it the superiorization approach. This is presented
along with experimental results that demonstrate our approach.

 \noindent {\bf Keywords.}
Nonlinear equations; common fixed point; cyclic sequential algorithm; tomographic absorption spectroscopy; alternating
fixed points algorithm; descent pairs algorithm; superiorization.

 \noindent {\bf 2020 Mathematics Subject Classification.}
35Q99, 47J05, 65K05, 90C30. }

\renewcommand{\thefootnote}{}
\footnotetext{ $^*$Corresponding author.
\par
E-mail address: francisco.aragon@ua.es (F. J. Aragón-Artacho), (W. Cai) cweiwei@sjtu.edu.cn, yair@math.haifa.ac.il (Y. Censor), avivgi@hit.ac.il (A. Gibali), scy1949@sjtu.edu.cn (C. Shui) and david.torregrosa@ua.es (D. Torregrosa-Belén).
\par
Received: January 24, 2024. Revised: May 10, 2024.

\rightline {\tiny   \copyright  2022 Communications in Optimization Theory}}

\renewcommand{\thefootnote}{\arabic{footnote}}

\section{Introduction\label{sec:Introduction}}

Solving systems of nonlinear equations has long been and still is
a fundamental problem in mathematics with countless real-world applications
that demand efficient methods to accomplish the task, see, e.g., the
books by Ortega and Rheinboldt \cite{OR-book-2000} and by Rheinboldt
\cite{rheinboldt1998methods}. A Google Scholar search on this topic
returns, not surprisingly, numerous entries. A central approach is
based on transforming the systems of nonlinear equations to an optimization
problem and using methods from that field, mainly methods for unconstrained
minimization which are commonly geared toward convex functions, see,
e.g., Boyd and Vandenberghe's book \cite{boyd2004convex}.

In practical situations it is, however, often the case that no derivatives
of the functions that comprise the system of equations are known and
even if derivatives exist, they are not calculable. This hinders the
applicability of minimization methods and a variety of heuristics
have been developed such as, for example, the simulated annealing
method, consult, e.g., the information on the auto-generated ScienceDirect
Webpage on this subject\footnote{https://www.sciencedirect.com/topics/engineering/simulated-annealing-algorithm.}.

An alternative route is to transform the system of nonlinear equations
into an operator equation such that every solution to the system is
a fixed point of the operator and vice versa. This approach hinges
on the premise that the associated operator is a self-mapping from
a space into itself. In this paper we investigate the situation when
this does not hold and the operator maps one space into a different
space. For this scenario we propose an alternating fixed point approach.
Specifically, we define a suitable family of fixed point operators
that allows constructing an \textbf{alternating common fixed points
algorithm} that can handle the problem.

However, in order to guarantee the convergence of the algorithm, some
quite restrictive conditions on these operators are required. In particular,
the approach seems not to be practical for the real-world application
in tomographic absorption spectroscopy in which we are interested
here.

Facing this kind of problems and motivated by the alternating fixed
point approach described above, we take a deeper look at the properties
of the equations at hand and suggest to use them in a different way.
In particular, we address the case in which the equations depend on
two variables, and the dependence on one of them is linear. For this
problem, we propose a derivative-free algorithm which also acts alternatingly
on each of the variables but makes use of descent directions of the
summands of the least squares problem associated to the system of
equations. We call it the \textbf{descent pairs algorithm} (DPA).
The iterative nature of our descent pairs algorithm enables us to
introduce a priori conditions into its iterative process. We do this
via the \textbf{superiorization methodology}. This methodology works
by taking an iterative algorithm, investigating its \textbf{perturbation
resilience}, and then, using proactively such permitted perturbations,
it forces the perturbed algorithm to do something useful in addition
to what it is originally designed to do. We present a numerical validation
of the descent pairs algorithm, with and without superiorization,
for a real-world application in tomographic absorption spectroscopy.
In this field, problems as the one presented here have been widely
studied and a great variety of algorithms have been employed to tackle
them. Our experiments show that the approach proposed here yields
results that compete well with those obtained by these methods under
similar conditions. As a general comment we care to mention that common
fixed point iterative methods and related problems are a field of
vigorous research with many new directions and developments, see,
e.g., \cite{mewomo1,mewomo2,mewomo3} and Cegielski's book \cite{Ceg-book}.

The paper is structured as follows. In Section \ref{sect:1}, we briefly
recall the well-known fixed point approach for tackling systems of
nonlinear equations and present our alternating common fixed points
algorithm adapted for the case in which the operator is not a self-mapping.
In Section~\ref{sec:descentdirections}, the particular instance
of the problem with the linear relation is tackled. We present our
descent pairs algorithm and mathematically support the idea behind
the algorithmic scheme. Section \ref{Sect:TAS}, contains a broad
view of tomographic absorption spectroscopy theory and in the last
subsection, we include experiments that successfully demonstrate the
good performance of our descent pairs algorithm, with and without
superiorization, in this field.

\section{Problem formulation and an alternating common fixed point algorithm}

\label{sect:1} We are interested in solving a system of nonlinear
equations as formulated next.
\begin{problem}
\label{prob:nonl-syst}Let $\mathbb{R}^{M}$ be the Euclidean $M$-dimensional
space. Consider a family of functions $\beta_{k}:\mathbb{R}^{M}\times\mathbb{R}^{M}\to\mathbb{R}^{M}$
and vectors $b^{k}=(b_{j}^{k})_{j=1}^{M}\in\mathbb{R}^{M}$, for $k\in\{1,2,\ldots,W\}$.
\begin{equation}
\text{Find }x,y\in\mathbb{R}^{M}\text{ such that }\beta_{k}(x,y)=b^{k},\quad k=1,2,\ldots,W.\label{e:absyst}
\end{equation}
\end{problem}

The motivation to look at this problem comes from a real-world application
in \textbf{tomographic absorption spectroscopy} (TAS), see, e.g.,
Dai et al.~\cite{2018JQSRT}, that we discuss later in Section~\ref{Sect:TAS}.
In some real-world applications, including TAS, the following condition
prevails.

\begin{condition} \label{cond:no-derivatives} No derivatives of
the functions $\beta_{k}$ are known and even if they exist they are
not calculable. \end{condition}

A common approach for solving systems of nonlinear equations in such
a situation consists of translating the system into a \textbf{fixed
point problem} (FPP), see, e.g., Combettes~\cite{framework2020}
and Combettes and Pesquet~\cite{combettes-eusipco-2018}.

To illustrate this approach, consider a system of nonlinear equations
\begin{equation}
\gamma_{j}(z)=c_{j},\text{ }\ j=1,2,\ldots,M,\label{eq:generic-prob}
\end{equation}
where $\gamma_{j}:\mathbb{R}^{M}\rightarrow\mathbb{R}$, for $j=1,2,\ldots,M,$
are given real-valued functions and $c=(c_{j})_{j=1}^{M}\in\mathbb{R}^{M}$
is a given vector. Denoting by $\varGamma:\mathbb{R}^{M}\rightarrow\mathbb{R}^{M}$
the operator
\begin{equation}
\varGamma:=\left(\begin{array}{c}
\gamma_{1}\\
\gamma_{2}\\
\vdots\\
\gamma_{M}
\end{array}\right),\label{eq:operator-1}
\end{equation}
the system (\ref{eq:generic-prob}) becomes an \textbf{operator equation}
\begin{equation}
\varGamma(z)=c.
\end{equation}
Since $\varGamma$ is a self-mapping, it is well-known how to translate
the system (\ref{eq:generic-prob}) into a problem of finding a point
in the set of fixed points $\Fix(T):=\{x\in\mathbb{R}^{M}\mid T(x)=x\}$
of a suitably defined operator $T$ (see, e.g., Berinde's book \cite[Chapter 8, page 179]{berinde-book-2007}).
This is done by considering the operator $T:\mathbb{R}^{M}\rightarrow\mathbb{R}^{M}$
given by
\begin{equation}
T:=c+(\Id-\varGamma),\label{e:trick}
\end{equation}
where $\Id:\mathbb{R}^{M}\rightarrow\mathbb{R}^{M}$ is the identity
operator, that is, $\Id(x)=x$. Then, it is easy to see that for a
point $x^{*}\in\mathbb{R}^{M}$
\begin{equation}
T(x^{*})=x^{*}\text{ \ if and only if }\varGamma(x^{*})=c
\end{equation}
and one can solve the system (\ref{eq:generic-prob}) by solving the
fixed point problem for the operator~$T$.

The difficulty in applying this fixed point approach to the problem
posed in \eqref{e:absyst} lies in the fact that $\beta_{k}:\mathbb{R}^{M}\times\mathbb{R}^{M}\to\mathbb{R}^{M}$
are not self-mappings. To the best of our knowledge, the challenge
of adapting the fixed point theory to this setting has not been attended
before. Therefore, we create here an alternating common fixed point
algorithm that applies the approach of fixed point theory alternatingly
to each of the two vector variables $x$ and $y$ of the functions
$\beta_{k}$ of (\ref{e:absyst}).

The adaptation of the fixed point theory to our problem works as follows.
For any pair $x,y\in\mathbb{R}^{M}$ and for any $k=1,2,\ldots,W$,
define the operators
\begin{equation}
B_{k}\left(\begin{array}{c}
x\\
y
\end{array}\right):=\left(\begin{array}{c}
\beta_{k}(x,y)\\
\beta_{k}(x,y)
\end{array}\right).
\end{equation}
Then $B_{k}:\mathbb{R}^{M}\times\mathbb{R}^{M}\rightarrow\mathbb{R}^{M}\times\mathbb{R}^{M}$
are self-mappings and, for all $k=1,2,\ldots,W$,
\begin{equation}
\beta_{k}(x,y)=b^{k}\Longleftrightarrow B_{k}\left(\begin{array}{c}
x\\
y
\end{array}\right)=\left(\begin{array}{c}
b^{k}\\
b^{k}
\end{array}\right).
\end{equation}
Now use the technique of Equation \eqref{e:trick} and define, for
all $k=1,2,\ldots,W,$ the operators
\begin{equation}
T_{k}:=\left(\begin{array}{c}
b^{k}\\
b^{k}
\end{array}\right)+\left(\Id-B_{k}\right),
\end{equation}
i.e.,
\begin{equation}
T_{k}\left(\begin{array}{c}
x\\
y
\end{array}\right):=\left(\begin{array}{c}
b^{k}\\
b^{k}
\end{array}\right)+\left(\left(\begin{array}{c}
x\\
y
\end{array}\right)-B_{k}\left(\begin{array}{c}
x\\
y
\end{array}\right)\right).\label{e:Tkdouble}
\end{equation}
Then, for all $k=1,2,\ldots,W,$
\begin{equation}
T_{k}\left(\begin{array}{c}
x^{\ast}\\
y^{\ast}
\end{array}\right)=\left(\begin{array}{c}
x^{\ast}\\
y^{\ast}
\end{array}\right)\Longleftrightarrow\beta_{k}(x^{\ast},y^{\ast})=b^{k},
\end{equation}
and finding a solution to Problem \ref{prob:nonl-syst} is equivalent
to solving the common fixed point problem (CFPP)
\begin{equation}
\text{Find }\left(\begin{array}{c}
x^{*}\\
y^{*}
\end{array}\right)\in{\displaystyle \bigcap_{k=1}^{W}\Fix(T_{k}).}\label{e:CFFP}
\end{equation}
There is an extensive literature of algorithms devoted to solving
the CFPP in \eqref{e:CFFP}, see, e.g., Zaslavski's book \cite{Zaslavski-book-2016}.
As an example, the algorithm presented here in Algorithm \ref{alg:AFP-1}
is the cyclic version of the \textbf{almost cyclic sequential algorithm}
(ACSA) for the common fixed point problem, in Censor and Segal \cite[Algorithm 5]{SegalCensor},
which is, in turn, a special case of the algorithm in Combettes \cite[Algorithm 6.1]{combettes-quasi-2001}.

For the operators defined in \eqref{e:Tkdouble}, Algorithm \ref{alg:AFP-1}
leads to the proposed \textbf{alternating common fixed points algorithm}.
It is the iterative process, that starts with an arbitrary $\left(\begin{array}{c}
x^{0}\\
y^{0}
\end{array}\right)\in\mathbb{R}^{M}\times\mathbb{R}^{M},$ and then, for all $\ell\geq0,$ updates according to
\begin{equation}
\left(\begin{array}{c}
x^{\ell+1}\\
y^{\ell+1}
\end{array}\right)=\left(\begin{array}{c}
x^{\ell}\\
y^{\ell}
\end{array}\right)+\lambda_{\ell}\left(\left(\begin{array}{c}
b^{i(\ell)}\\
b^{i(\ell)}
\end{array}\right)-\left(\begin{array}{c}
\beta_{i(\ell)}(x^{\ell},y^{\ell})\\
\beta_{i(\ell)}(x^{\ell},y^{\ell})
\end{array}\right)\right).\label{eq:scheme0}
\end{equation}
However, the convergence theorem for ACSA, see, e.g., \cite[Theorem 6]{SegalCensor},
requires additional assumptions on the involved operators. They should
be ``directed operators''\footnote{Directed operators are nowadays called ``cutters'', see \cite[Subsection 2.1.3]{Ceg-book}.}
such that, for all $k,$ $T_{k}-\Id$ should be closed at $0$. An
(uninteresting) example verifying these properties are the projection
operators onto the diagonal subspace.

{\SetAlgoNoLine
\begin{algorithm}[h]
\textbf{Initialization:} Let $\{T_{k}:\mathbb{R}^{M}\rightarrow\mathbb{R}^{M}\}_{k=1}^{W}$
be a finite family of operators$.$ Set $\{\lambda_{\ell}\}_{\ell=0}^{\infty}$
a sequence in $[0,2]$. Choose $x^{0}\in\mathbb{R}^{M}$\; set {$\ell=0$}\;
\Repeat{ set {$i(\ell)=\ell\,\modul W+1$}\; set {$x^{\ell+1}=x^{\ell}+\lambda_{\ell}\left(T_{i(\ell)}(x^{\ell})-x^{\ell}\right)$}\;
} \caption{The Cyclic Sequential Algorithm (CSA) for the common fixed point problem}
\label{alg:AFP-1}
\end{algorithm}}

The difficulty of having operators whose block coordinates are identically
defined, as is the case for the operators in \eqref{e:Tkdouble},
and verifying the assumptions of the convergence theorem for ACSA,
lead us to search for an alternative approach to handle the problem.
In the next section we develop a different algorithmic scheme which
is inspired by the iterative process \eqref{eq:scheme0}.

\section{The descent pairs algorithm}

\label{sec:descentdirections}

In this section we propose and motivate a derivative-free algorithm
for tackling Problem \ref{prob:nonl-syst} in the case when the operators
fulfill the following two assumptions. We denote the set of vectors
whose components are all different from zero by
\begin{equation}
\mathbb{R_{\mathrm{\neq0}}^{\mathrm{\mathit{M}}}}:=\{x\in\mathbb{R}^{M}\mid x_{j}\neq0\;\textup{for}\;\textup{all}\;j=1,2,\ldots,M\}.
\end{equation}
\begin{assumption}\label{asumpt:1} For every $k\in{\{1,2,\ldots,W\}}$,
the operators $\beta_{k}$ depend linearly on the second variable
and nonlinearly on the first variable, such that they can be expressed,
in matrix form, as
\begin{equation}
\beta_{k}(x,y)={\rm diag}\left(\widetilde{\beta}_{k}(x)\right)y,\quad k=1,2,\ldots,W,\label{e:assumpt1}
\end{equation}
where $\widetilde{\beta}_{k}:\mathbb{R}^{M}\to\mathbb{R}_{\mathrm{\neq0}}^{\mathrm{\mathit{M}}}$
and ${\rm diag}(u)$ denotes the diagonal matrix with the vector $u$
along its diagonal. \end{assumption}

For our second assumption, we need to introduce the following definitions.

\begin{definition} (i) \textbf{Descent direction} (see, e.g., \cite[Definition 5.1]{Aragon2019}).
Given a function $g:\mathbb{R}^{M}\to\mathbb{R}$ which is differentiable
at some vector $x\in\mathbb{R}^{M}$, then a vector $v\in\mathbb{R}^{M}$
is called a \textbf{descent direction for $g$ at the point $x$}
if
\begin{equation}
g'(x;v)=\lim_{t\to0}\frac{g(x+tv)-g(x)}{t}=\nabla g(x)^{T}v<0.
\end{equation}

(ii) \textbf{Descent pair}. Given a mapping $f:\mathbb{R}^{M}\to\mathbb{R}^{N}$
and a vector $u\in\mathbb{R}^{N}$ we say that $(f,u)$ is a \textbf{descent
pair}\emph{ }if, for every $x\in\mathbb{R}^{M}$, the vector
\begin{equation}
v=v(x):=u-f(x)
\end{equation}
is a descent direction at the point $x$ for the function $g:\mathbb{R}^{M}\to\mathbb{R}$
defined as
\begin{equation}
g(\cdot):=\frac{1}{2}\|u-f(\cdot)\|^{2}.
\end{equation}
\end{definition}

\begin{assumption}\label{assumpt:2} There exists an index $t\in{\{1,2,\ldots,W\}}$
such that $b^{t}\in\mathbb{R}_{\mathrm{\neq0}}^{\mathrm{\mathit{M}}}$
and such that for all $k=1,2,\ldots,W$, the pairs $(f_{k},u^{k})$,
with

\begin{equation}
f_{k}(\cdot):={\rm diag}\left(\widetilde{\beta}_{t}(\cdot)\right)\widetilde{\beta}_{k}(\cdot)\text{ and }u^{k}:={\rm diag}\left(b^{t}\right)^{-1}b^{k},
\end{equation}
and where the ``$-1$'' power represents matrix inversion, are descent
pairs. Moreover, for each $x\in\mathbb{R}^{M}$, the pairs $(h_{k},b^{k})$,
where
\begin{equation}
h_{k}(\cdot):=\beta_{k}(x,\cdot),
\end{equation}
are descent pairs for all $k=1,2,\ldots,W$. \end{assumption}

In the tomographic absorption spectroscopy problem, modeled as Problem
\ref{prob:nonl-syst} and studied in subsection \ref{sect:TAS3} below,
Assumptions \ref{asumpt:1} and \ref{assumpt:2} are fulfilled.

The next lemma shows that, although Problem \ref{prob:nonl-syst}
is a system which depends on both variables $x$ and $y$, the Assumption
\ref{asumpt:1} on the operators $\beta_{k}$ implies that the variable
vector $y$ does not interfere with the suitability of the variable
vector $x$ for solving the system.

\begin{lemma}\label{lem:1} Consider a family of operators $\beta_{k}:\mathbb{R}^{M}\times\mathbb{R_{\mathrm{\neq0}}^{\mathrm{\mathit{M}}}}\to\mathbb{R_{\mathrm{\neq0}}^{\mathrm{\mathit{M}}}}$,
for $k\in\{1,2,\ldots,W\}$, for which Assumption \ref{asumpt:1}
holds. Then, a point $x^{*}\in\mathbb{R}^{M}$ belongs to a solution
pair $(x^{*},y^{*})$ of the system of equations given by \eqref{e:absyst}
if and only if $x^{*}$ is a solution of the system
\begin{equation}
{\rm diag}\left(\widetilde{\beta}_{t}(x)\right)^{-1}\widetilde{\beta}_{k}(x)={\rm diag}(b^{t})^{-1}b^{k},\;\textup{for}\;\textup{all}\;k=1,2,\ldots,W,\;k\neq t,\label{e:systx}
\end{equation}
with $t\in{\{1,2,\ldots,W\}}$ such that $b^{t}\in\mathbb{R_{\mathrm{\neq0}}^{\mathrm{\mathit{M}}}}$.
\end{lemma}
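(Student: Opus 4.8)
The plan is to reduce the two-variable system $\beta_k(x,y)=b^k$ to the single-variable system \eqref{e:systx} by exploiting the diagonal structure in Assumption \ref{asumpt:1}. First I would unpack what a solution pair means: $(x^*,y^*)$ solves \eqref{e:absyst} if and only if ${\rm diag}(\widetilde\beta_k(x^*))\,y^*=b^k$ for every $k=1,2,\ldots,W$. Since $\widetilde\beta_k$ takes values in $\mathbb{R}_{\neq0}^M$, the matrix ${\rm diag}(\widetilde\beta_k(x^*))$ is invertible, so this is equivalent to $y^*={\rm diag}(\widetilde\beta_k(x^*))^{-1}b^k$ for each $k$. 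The content of the lemma is then the observation that this common value $y^*$ is forced to be the same across all $k$, and that self-consistency is exactly what \eqref{e:systx} expresses.

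For the forward direction, suppose $(x^*,y^*)$ is a solution pair. Picking the distinguished index $t$ (for which $b^t\in\mathbb{R}_{\neq0}^M$, guaranteed by hypothesis) gives $y^*={\rm diag}(\widetilde\beta_t(x^*))^{-1}b^t$ from the $k=t$ equation. Substituting this $y^*$ into the $k$-th equation ${\rm diag}(\widetilde\beta_k(x^*))\,y^*=b^k$ yields ${\rm diag}(\widetilde\beta_k(x^*))\,{\rm diag}(\widetilde\beta_t(x^*))^{-1}b^t=b^k$; because diagonal matrices commute, I can rearrange this to ${\rm diag}(\widetilde\beta_t(x^*))^{-1}\widetilde\beta_k(x^*)={\rm diag}(b^t)^{-1}b^k$, which is precisely \eqref{e:systx} (here I also use that ${\rm diag}(u)v={\rm diag}(v)u$ for vectors, to pass between the matrix-times-vector and componentwise-product forms). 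Conversely, given $x^*$ solving \eqref{e:systx}, I would simply \emph{define} $y^*:={\rm diag}(\widetilde\beta_t(x^*))^{-1}b^t\in\mathbb{R}_{\neq0}^M$ and check directly that $\beta_k(x^*,y^*)={\rm diag}(\widetilde\beta_k(x^*))\,y^*=b^k$ for all $k$: for $k=t$ this is immediate, and for $k\neq t$ it follows by multiplying \eqref{e:systx} through by ${\rm diag}(\widetilde\beta_t(x^*))$ and using commutativity of diagonal matrices again.

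I do not anticipate a serious obstacle here; the proof is essentially bookkeeping with diagonal matrices. The one point requiring a little care is the equivalence between writing things as ${\rm diag}(\widetilde\beta_t(x))^{-1}\widetilde\beta_k(x)$ (a vector) and the matrix identity ${\rm diag}(\widetilde\beta_k(x))\,{\rm diag}(\widetilde\beta_t(x))^{-1}$ acting on $b^t$ — I would state once, at the start, the elementary fact that for $u,v\in\mathbb{R}^M$ one has ${\rm diag}(u)v={\rm diag}(v)u$ and that all diagonal matrices commute, so that every manipulation below is just componentwise multiplication and division of nonzero reals. A secondary bookkeeping point is to make sure the chosen index $t$ is the same one appearing in \eqref{e:systx}, and that the nonvanishing hypotheses ($b^t\in\mathbb{R}_{\neq0}^M$ and $\widetilde\beta_k$ valued in $\mathbb{R}_{\neq0}^M$) are invoked exactly where invertibility is needed; I would flag each such use explicitly.
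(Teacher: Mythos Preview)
Your proposal is correct and follows essentially the same route as the paper: both arguments reduce to componentwise manipulation of diagonal matrices, using commutativity and the identity ${\rm diag}(u)v={\rm diag}(v)u$ to eliminate $y^*$ via the $t$-th equation. If anything, your write-up is more complete than the paper's, which presents only the forward implication as a chain of equivalences (implicitly using $\beta_t(x^*,y^*)=b^t$) and never explicitly constructs $y^*:={\rm diag}(\widetilde\beta_t(x^*))^{-1}b^t$ for the converse as you do.
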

\begin{proof}
Let $(x^{*},y^{*})$ be a solution of the system given by \eqref{e:absyst}.
Then, by Assumption~\ref{asumpt:1}, for all $k,t\in{\{1,2,\ldots,W\}},$
and using elementary rules for matrix inversion and diagonal matrices,
we have
\begin{equation}
\begin{aligned}\beta_{k}(x^{*},y^{*})=b^{k} & \Leftrightarrow{\rm diag}\left(\widetilde{\beta}_{k}(x^{*})\right)y^{*}=b^{k}\\
 & \Leftrightarrow{\rm diag}\left({\rm diag}\left(\widetilde{\beta}_{t}(x^{*})\right)y^{*}\right)^{-1}{\rm diag}\left(\widetilde{\beta}_{k}(x^{*})\right)y^{*}={\rm diag}\left(b^{t}\right)^{-1}b^{k}\\
 & \Leftrightarrow{\rm diag}\left(\widetilde{\beta}_{t}(x^{*})\right)^{-1}{\rm diag}\left(y^{*}\right)^{-1}{\rm diag}\left(\widetilde{\beta}_{k}(x^{*})\right)y^{*}={\rm diag}\left(b^{t}\right)^{-1}b^{k}\\
 & \Leftrightarrow{\rm diag}\left(\widetilde{\beta}_{t}(x^{*})\right)^{-1}\widetilde{\beta}_{k}(x^{*})={\rm diag}\left(b^{t}\right)^{-1}b^{k}.
\end{aligned}
\end{equation}
Thus, $x^{*}$ is a solution for the single vector variable system
\eqref{e:systx}.
\end{proof}
In Algorithm \ref{a:abstract} below, which we call the \textbf{descent
pairs algorithm},\textbf{ }we present a method for tackling a system
of the form \eqref{e:absyst} that obeys Assumptions \ref{asumpt:1}
and \ref{assumpt:2}. The motivation of the algorithm is as follows.
In order to obtain a solution pair $(x^{*},y^{*})$ to the system
\eqref{e:absyst} we generate two separate iterative sequences. The
first sequence is denoted by $\{x^{\ell}\}_{\ell=0}^{\infty}$ and
employs Lemma \ref{lem:1} to find $x^{*}$ as a solution to the system
\eqref{e:systx}. To achieve this, we consider the least squares problem
associated with \eqref{e:systx}, which is
\begin{equation}
\argmin_{x\in\mathbb{R}^{M}}{\displaystyle \sum_{k=1,k\neq t}^{W}g_{k}(x)={\displaystyle \sum_{k=1,k\neq t}^{W}\frac{1}{2}\left\Vert {\rm diag}\left(\widetilde{\beta}_{t}(x)\right)^{-1}\widetilde{\beta}_{k}(x)-{\rm diag}\left(b^{t}\right)^{-1}b^{k}\right\Vert ^{2}.}}\label{e:lspx}
\end{equation}
Lines 4 to 8 of the algorithm generate $x^{\ell+1}$ from $x^{\ell}$
by sequentially updating $x^{\ell}$ with constant step-size line
searches for each of the summands in \eqref{e:lspx}, in the descent
direction given by the descent pair provided by Assumption \ref{assumpt:2}.

The purpose of the second sequence $\left\{ y^{\ell}\right\} _{\ell=0}^{\infty}$
is to find the point $y^{*}$. Lines 9 to 13 show how to obtain $y^{\ell+1}$
from $y^{\ell}$ and $x^{\ell+1}$. Indeed, $y^{\ell}$ is sequentially
updated by performing a constant step-size line search for each of
the functions
\begin{equation}
h_{k}(y):=\frac{1}{2}\|b^{k}-\beta_{k}(x^{\ell+1},y)\|^{2},\quad k=1,2,\ldots,W,
\end{equation}
in the descent direction determined by the descent pair given by Assumption
\ref{assumpt:2}.\smallskip{}

{\SetAlgoNoLine
\begin{algorithm}[h]
\textbf{Initialization.}~Choose $x^{0}$, $y^{0}\in\mathbb{R}^{M}$.
Set an index $t\in{\{1,2,\ldots,W\}}$ in compliance with Assumption
\ref{assumpt:2} and pick real fixed relaxation parameters $\lambda_{x}>0$
and $\lambda_{y}>0$\; Set {$\ell=0$}\; \Repeat{ Set {$x^{\ell,0}=x^{\ell}$}\;
\For{$q=1,2,\ldots,W$}{ $x^{\ell,q}=x^{\ell,q-1}+\lambda_{x}\left({\rm diag}(b^{t})^{-1}b^{q}-{\rm diag}\left(\widetilde{\beta}_{t}(x^{\ell,q-1})\right)^{-1}\widetilde{\beta}_{q}(x^{\ell,q-1})\right)$\;
} Set {$x^{\ell+1}=x^{\ell,W}$}\; Set {$y^{\ell,0}=y^{\ell}$}\;
\For{$q=1,2,\ldots,W$}{ $y^{\ell,q}=y^{\ell,q-1}+\lambda_{y}\left(b^{q}-\beta_{q}(x^{\ell+1},y^{\ell,q-1})\right)$\;
} Set $y^{\ell+1}=y^{\ell,W}$\; } \caption{The Descent Pairs Algorithm (DPA).}
\label{a:abstract}
\end{algorithm}
}

In Section \ref{Sect:TAS}, we illustrate the good performance of
the proposed Algorithm~\ref{a:abstract} in a demonstrative numerical
experiment arising from a real-world problem in tomographic absorption
spectroscopy, in which our algorithm shows a competitive potential
vis-{à}-vis with state-of-the-art methods in the field.

\section{An experimental demonstration in tomographic absorption spectroscopy}

\label{Sect:TAS}

\subsection{Introduction to tomographic absorption spectroscopy}

Absorption spectroscopy is a popular technique for gas sensing which
can simultaneously retrieve thermophysical properties such as temperature,
species concentration and pressure \cite{CAI20171}. When a laser
beam penetrates a \textbf{region of interest} (ROI) filled with gaseous
medium, its intensity is attenuated due to the absorption of the gas
molecules along the \textbf{line-of-sight} (LOS).

The aim of absorption spectroscopy is to obtain information of the
gaseous medium by measuring spectrum-specified absorbance. The Beer-Lambert
law, which relates the attenuation of light to the properties of the
material through which the light is traveling, provides the relationship
between the gas properties (i.e., pressure, temperature and concentration)
and the absorbance, see \cite{Lin:10},

\begin{equation}
b(\nu):=\ln(I_{0}(\nu)/I_{t}(\nu))=\int PS(\nu,x)\,y\,dL.\label{eq:beer-lamb}
\end{equation}

The absorbance $b(\nu)$ is the logarithmic ratio of the incident
$I_{0}(\nu)$ and transmitted $I_{t}(\nu)$ intensities for the absorption
line at wavelength $\nu$. In (\ref{eq:beer-lamb}), $P$ is the pressure,
which is supposed to be a known constant, and $S$ is the line strength
function, whose value depends on the wavelength $\nu$ and the temperature
$x$. For each wavelength of interest, the line function $S$ has
an approximately negative exponent relation with the reciprocal of
the temperature, i.e. $1/x$. The concentration of the absorbing species
is $y$ and $L$ is the length of the LOS.

In practical applications, the gas properties are usually non-uniform
along the LOS, therefore, tomography is needed to enable the spatial
resolution of absorption spectroscopy. In order to represent a mathematically
tractable model, full discretization is done, meaning that the light
spectrum is discretized into a finite number of wavelengths, the ROI
is discretized into a finite number of pixels/voxels and the external
light sources are discretized into a finite number of individual beams.
This modeling of tomographic absorption spectroscopy (TAS) is illustrated
in Figure \ref{fig:1}.

\begin{figure}[h]
\centering \includegraphics[scale=0.65]{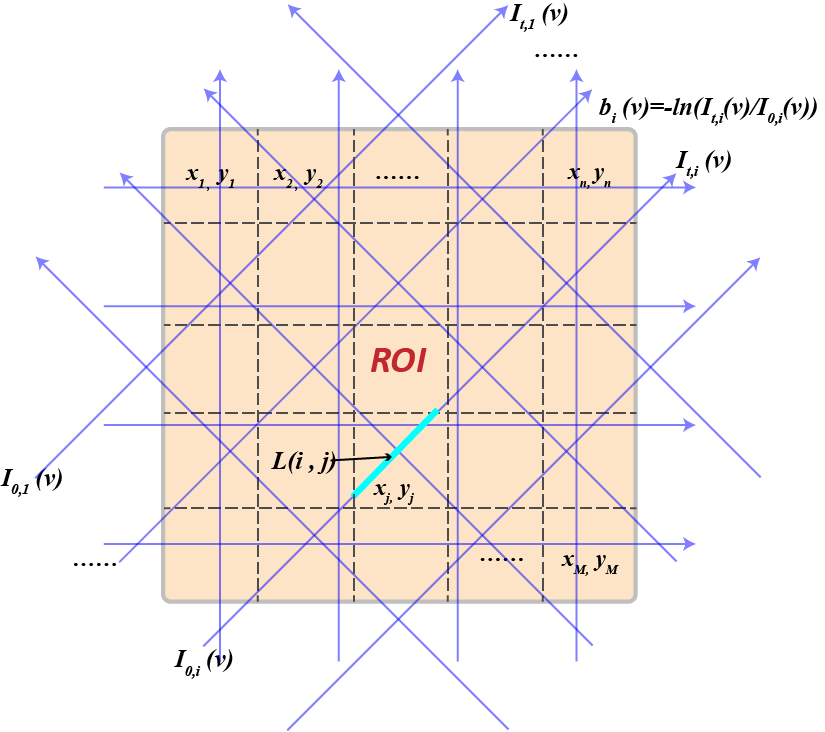}
\caption{Illustration of the fully-discretized model for tomographic absorption
spectroscopy.}
\label{fig:1}
\end{figure}

The discretized model for TAS is obtained as follows. First, the absorbance
is measured along a finite number of beams, indexed by $i=1,2,\ldots,N,$
covering the ROI. The light spectrum is discretized into a finite
number of wavelengths, indexed by $k=1,2,\ldots,W.$ When $N$ beams
are employed and $W$ absorption features are probed in the measurements,
the relationship between the temperature $x,$ the concentration of
the absorbing species $y$ and the measured absorbance $b$ can be
expressed as

\begin{equation}
b_{i}^{k}=\mathop{\intop_{L_{i}}}\alpha_{\nu_{k}}(x,y)\,dL,\;k=1,2,\ldots,W,\,i=1,2,\ldots,N,\label{e:int}
\end{equation}
where $b_{i}^{k}$ is the absorbance along the $i$-th beam with length
$L_{i}$ at the $k$-th wavelength $\nu_{k}$, and $\alpha_{\nu_{k}}$
is the absorption coefficient, related to the wavelength $\nu_{k}$
and the local values of $x$ and $y$.

Finally, we assume that the ROI is discretized and consists of a two-dimensional
square, meshed into $M=n\times n$ square pixels, indexed by $j=1,2,\ldots,M$.
Thus, denoting by $L_{i,j}$ the length of intersection of the $i$-th
light beam within the $j$-th pixel, equation \eqref{e:int} is reinterpreted
in its fully-discretized form as

\begin{equation}
b_{i}^{k}=\mathop{\sum_{j=1}^{M}}\alpha_{\nu_{k}}(x_{j},y_{j})\cdot L_{i,j}=\mathop{\sum_{j=1}^{M}}\alpha_{k}(x_{j},y_{j})\cdot L_{i,j},\label{e:bik}
\end{equation}
for $k=1,2,\ldots,W,$ $i=1,2,\ldots,N,$ $j=1,2,\ldots,M$. The $\alpha_{k}:\mathbb{R}\times\mathbb{R}\to\mathbb{R}$
are nonlinear operators which measure the absorption coefficient at
the $k$-th wavelength. Note that after the discretization, $x$ and
$y$ are redefined as vectors in $\mathbb{R}^{M}$.

Repeating equation \eqref{e:bik} for all the beams at the $k$-th
wavelength, a set of equations is formulated in matrix form as

\begin{equation}
b^{k}=\left(\begin{array}{c}
b_{1}^{k}\\
b_{2}^{k}\\
\vdots\\
b_{i}^{k}\\
\vdots\\
b_{N}^{k}
\end{array}\right)=\left(\begin{array}{ccccc}
L_{1,1} & \cdots & L_{1,j} & \cdots & L_{1,M}\\
\vdots & \ddots & \vdots & \ddots & \vdots\\
L_{i,1} & \cdots & L_{i,j} & \cdots & L_{i,M}\\
\vdots & \ddots & \vdots & \ddots & \vdots\\
L_{N,1} & \cdots & L_{N,j} & \cdots & L_{N,M}
\end{array}\right)\left(\begin{array}{c}
\alpha_{k}(x_{1},y_{1})\\
\alpha_{k}(x_{2},y_{2})\\
\vdots\\
\alpha_{k}(x_{j},y_{j})\\
\vdots\\
\alpha_{k}(x_{M},y_{M})
\end{array}\right)=L\alpha_{k}(x,y),\label{e:bkmatrix}
\end{equation}
for all $k=1,2,\ldots,W$, where $L$ denotes the matrix $L=(L_{i,j})_{i=1,j=1}^{N,M}$.
Thus, the fully-discretized modeling of the tomographic problem in
TAS results in solving the nonlinear system of equations

\begin{equation}
b^{k}=L\alpha_{k}(x,y),\quad k=1,2,\ldots,W,\label{e:systemb}
\end{equation}
where the vectors $b^{k}$ are known from measurements for all wavelengths
and the matrix \textbf{$L$} is calculated according to the beam arrangement.
In the past decades, numerous studies have made progress in this problem.
Next, we give a brief glance of it.

\subsection{Techniques for solving TAS problems}

\label{Sect:TAS2} Both nonlinear or linear approaches have been applied
in the literature to solve system \eqref{e:systemb}. Depending on
the approach for solving the tomographic problem, TAS can be divided
into \textbf{nonlinear TAS} and \textbf{linear TAS}. In nonlinear
TAS, all the equation systems are considered together and the inversion
is recast into a one-step optimization problem as \cite{DAI2018233}:

\begin{equation}
\underset{x,y\in\mathbb{R}^{M}}{\argmin}\mathop{\frac{1}{2}\sum_{k=1}^{W}}\|b^{k}-L\alpha_{k}(x,y)\|^{2}.\label{e:lsSA}
\end{equation}
The $x$ and $y$ distributions are directly solved from \eqref{e:lsSA}.
This optimization problem is usually solved by a global heuristic
optimization algorithm, such as \textbf{simulated annealing} (SA)
\cite{CAI2008250,DAI2018233}.

The SA algorithm was first proposed by Kirkpatrick et al. in 1983
\cite{Kirkpatrick671} and has been widely employed in large-scale
optimization problems \cite{LEITE1999545,CAI201011}. It is a heuristic
for finding the global minimum inspired by annealing in metallurgy.
A prominent advantage of SA is its insensitivity to the initial guesses
\cite{Ma:09}, while the major drawback is its high computational
cost. Besides, a priori information can be taken into consideration
\cite{DAI2018233}, which is another advantage of SA.

On the other hand, in linear TAS the problem is divided into two stages.
In the first stage, for every $k\in{\{1,2,\ldots,W\}}$, equation
\eqref{e:bkmatrix} is solved for each individual wavelength as a
linear equation system whose variables are the local absorption coefficients
$\alpha_{k}$. Classical algorithms, including \textbf{algebraic reconstruction
technique} (ART) \cite{GORDON1970471}, \textbf{maximum likelihood
expectation maximization} (MLEM) \cite{https://doi.org/10.1111/j.2517-6161.1977.tb01600.x}
and Tikhonov reconstruction \cite{DAUN2010105}, have been extensively
adopted for this stage.

For the second stage, absorption coefficients $a^{k}=(a_{j}^{k})_{j=1}^{M}\in\mathbb{R}^{M}$,
at each pixel $j$ and for each wavelength $k$, are supposed to have
been recovered in the first stage, i.e.,
\begin{equation}
a^{k}=\left(\begin{array}{c}
a_{1}^{k}\\
a_{2}^{k}\\
\vdots\\
a_{j}^{k}\\
\vdots\\
a_{M}^{k}
\end{array}\right)=\left(\begin{array}{c}
\alpha_{k}(x_{1},y_{1})\\
\alpha_{k}(x_{2},y_{2})\\
\vdots\\
\alpha_{k}(x_{j},y_{j})\\
\vdots\\
\alpha_{k}(x_{M},y_{M})
\end{array}\right),\quad k=1,2,\ldots,W.\label{e:systema}
\end{equation}
Now the properties $x$ and $y$ need to be solved according to the
nonlinear relationship between them and the absorption coefficients.
While, for every $k\in{\{1,2,\ldots,W\}}$, the absorption coefficients
$a^{k}$ have been recovered in the first stage, the $\alpha_{k}:\mathbb{R}\times\mathbb{R}\to\mathbb{R}$
become here operators. Commonly, in the literature, gas properties
within each pixel were calculated from the $a^{k}$s using a \textbf{nonlinear
fitting} (NF) method \cite{Grauer_2019}. This nonlinear fitting process
employs a trust-region-reflective algorithm \cite{doi:10.1137/0806023}
to find the least-squares of the discrepancy between $a^{k}$ and
$\alpha_{k}(x,y)$. The iterative process of this algorithm is briefly
described in Algorithm \ref{alg:NF}, see~\cite{doi:10.1137/0904038}.
In the algorithm, $g$ and $H$ refer to the first and second order
derivatives of the function to be solved, respectively, and $s=(s_{1},s_{2})$
is the step of the iteration. It should be noted that an analytical
expression of the nonlinear tomographic absorption spectroscopy problem
is not derivable. Here, the $g$ and $H$ are results of an approximation
of the nonlinear tomography formulation.

{\SetAlgoNoLine
\begin{algorithm}
\textbf{Initialization:} Set $\delta\in\mathbb{R}$. Choose $x_{j}^{0}\geq0$
and $y_{j}^{0}\geq0$\; set {$\ell=0$}\; \Repeat{set {$H=\nabla^{2}(\|a_{j}^{k}-\alpha_{k}(\cdot,\cdot)\|^{2})(x_{j}^{\ell},y_{j}^{\ell})$,
$g=\nabla(\|a_{j}^{k}-\alpha_{k}(\cdot,\cdot)\|^{2})(x_{j}^{\ell},y_{j}^{\ell})$}\;
set { $(s_{1},s_{2})=\argmin_{\|s\|\leq\delta}(\frac{1}{2}s^{T}Hs+g^{T}s)$}\;
\eIf{$\|a_{j}^{k}-\alpha_{k}(x_{j}^{\ell}+s_{1},y_{j}^{\ell}+s_{2})\|^{2}<\|a_{j}^{k}-\alpha_{k}(x_{j}^{\ell},y_{j}^{\ell})\|^{2}$}{
set {$x_{j}^{\ell+1}=x_{j}^{\ell}+s_{1}$, $y_{j}^{\ell+1}=y_{j}^{\ell}+s_{2}$}\; set {$\ell=\ell+1$}\;
adjust the trust region size $\delta$\; }{reduce the trust region
size $\delta$\;}}

\caption{The Nonlinear Fitting (NF) method applied in multi-spectral TAS.}
\label{alg:NF}
\end{algorithm}
}

A drawback of this method is that it still has a high computational
cost. Besides, since the algorithm obtains $x_{j}$ and $y_{j}$ pixel
by pixel, a priori information cannot be employed in this algorithm
for the process of obtaining temperature and concentration results
from absorption coefficients. That means, in traditional approaches
for TAS, a priori information is solely employed in the process to
solve the absorption coefficients, which is sometimes defective. For
example, if the temperature and the concentration satisfy different
a priori information, the method to add a priori information on absorption
coefficients cannot work. In addition, the calculation of temperature
and mole fraction from absorption coefficient can introduce extra
errors due to complicated error propagation. In the following sections,
we motivate the implementation of our proposed Algorithm \ref{a:abstract}
for tackling problems of the form given by \eqref{e:systema}, and
present a demonstrative example in which it outperforms the NF approach.

\subsection{Implementation of the descent pairs algorithm}

\label{sect:TAS3}Let $\mathbb{R}_{++}^{M}$ denote the positive orthant
of $\mathbb{R}^{M}.$ For every $k\in{\{1,2,\ldots,W\}}$, denote
by $\beta_{k}:\mathbb{R}_{++}^{M}\times\mathbb{R}_{++}^{M}\to\mathbb{R}_{++}^{M}$
the operator defined component-wise as
\begin{equation}
(\beta_{k}(x,y))_{j}:=\alpha_{k}(x_{j},y_{j}),\quad j=1,2,\ldots,M,
\end{equation}
where $\alpha_{k}:\mathbb{R}_{++}\times\mathbb{R}_{++}\to\mathbb{R}_{++}$
are the operators in \eqref{e:systema}. This yields the system of
equations
\begin{equation}
\beta_{k}(x,y)=a^{k},\quad k=1,2,\ldots,W.\label{e:systTAS}
\end{equation}
It is a known property of the absorption coefficients that the operators
$\alpha_{k}$ fulfill Assumption \ref{asumpt:1} \cite{CAI20171}.
Thus, there exist operators $\widetilde{\beta}_{k}:\mathbb{R}_{++}^{M}\to\mathbb{R}_{++}^{M}$
such that
\begin{equation}
\beta_{k}(x,y)={\rm diag}\left(\widetilde{\beta}_{k}(x)\right)y,\quad k=1,2,\ldots,W.\label{e:Eq8}
\end{equation}

In order to validate the implementation of the descent pairs algorithm
(Algorithm~\ref{a:abstract}) to this problem, we employ a property
of the TAS system of equations~(\ref{e:systTAS}) which is known
from the physics of the problem and is presented in the next remark.
\begin{rem}
For a fixed index $t\in{\{1,2,\ldots,W\}}$, define for each $k\in{\{1,2,\ldots,W\}}\setminus\left\{ t\right\} $
the functions
\begin{equation}
f_{k}(x):={\rm diag}\left(\widetilde{\beta}_{t}(x)\right)^{-1}\widetilde{\beta}_{k}(x).
\end{equation}
Then, it is known from the physics of the problem that the functions
$f_{k}$ are given component-wise by
\begin{equation}
(f_{k}(x))_{j}:=\frac{S_{k}}{S_{t}}\exp\left(-\left(E_{k}-E_{t}\right)\left(\frac{1}{x_{j}}-\frac{1}{T_{0}}\right)\right),\quad j=1,2,\ldots,M,
\end{equation}
where $S_{k}$, $S_{j}>0$ and $E_{k},E_{t}\in\mathbb{R}$ are constants
which depend on the wavelength, and $T_{0}$ is a positive known constant.
\end{rem}

The following lemmata assure that Assumption~\ref{assumpt:2} holds
for the system of equations (\ref{e:systTAS}).

\begin{lemma}\label{l:dd1} Let $t$ be an index such that $E_{t}=\min\left\{ E_{1},E_{2},\ldots,E_{W}\right\} $
and let $\bar{x}\in\mathbb{R}_{++}^{M}$be a fixed arbitrary vector.
Then, for any $k\in{\{1,2,\ldots,W\}\setminus{\left\{ t\right\} }}$,
the vector given by
\begin{equation}
v^{k}:={\rm diag}(a^{t})^{-1}a^{k}-f_{k}(\bar{x})
\end{equation}
is a descent direction at the point $\bar{x}$ for the function $g_{k}:\mathbb{R}_{++}^{M}\to\mathbb{R}_{+}$
defined as
\begin{equation}
g_{k}(\cdot):=\frac{1}{2}\|f_{k}(\cdot)-{\rm diag}(a^{t})^{-1}a^{k}\|^{2}.
\end{equation}
\end{lemma}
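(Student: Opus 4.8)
The goal is to show that the explicitly given vector $v^k$ is a descent direction for $g_k$ at $\bar x$, i.e., that $\nabla g_k(\bar x)^T v^k < 0$. Since $g_k(\cdot) = \tfrac12\|f_k(\cdot) - d\|^2$ with $d := {\rm diag}(a^t)^{-1}a^k$, the chain rule gives $\nabla g_k(\bar x) = Jf_k(\bar x)^T\big(f_k(\bar x) - d\big)$, where $Jf_k$ is the Jacobian of $f_k$. The key structural fact is that $f_k$ acts \emph{componentwise}: from the Remark, $(f_k(x))_j = \tfrac{S_k}{S_t}\exp\!\big(-(E_k-E_t)(\tfrac1{x_j}-\tfrac1{T_0})\big)$ depends only on $x_j$. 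Hence $Jf_k(\bar x)$ is a \emph{diagonal} matrix, say ${\rm diag}(\rho)$ with $\rho_j = \tfrac{\partial}{\partial x_j}(f_k(x))_j\big|_{x=\bar x}$. This reduces the inner product to a sum of decoupled scalar terms:
$$
\nabla g_k(\bar x)^T v^k = \big(f_k(\bar x)-d\big)^T {\rm diag}(\rho)\,\big(d - f_k(\bar x)\big) = -\sum_{j=1}^M \rho_j\,\big((f_k(\bar x))_j - d_j\big)^2,
$$
where I have used $v^k = d - f_k(\bar x)$.

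So it suffices to show $\rho_j > 0$ for every $j$ (then every term with $(f_k(\bar x))_j \neq d_j$ contributes strictly negatively, and at least one such $j$ exists unless $\bar x$ already solves the system, in which case $v^k = 0$ — a degenerate case one should either exclude or note gives $g_k'(\bar x;v^k)=0$, still satisfying the relevant inequality in the nonstrict sense; more care is needed here depending on the exact convention, see below). Differentiating the scalar function $s \mapsto \tfrac{S_k}{S_t}\exp(-(E_k-E_t)(\tfrac1s - \tfrac1{T_0}))$ with respect to $s$ yields
$$
\rho_j = \frac{S_k}{S_t}\exp\!\Big(-(E_k-E_t)\Big(\tfrac1{\bar x_j}-\tfrac1{T_0}\Big)\Big)\cdot \frac{E_k - E_t}{\bar x_j^2}.
$$
The exponential factor and $S_k/S_t$ are strictly positive, and $\bar x_j^2 > 0$ since $\bar x \in \mathbb{R}_{++}^M$; hence the sign of $\rho_j$ is exactly the sign of $E_k - E_t$. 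This is where the hypothesis $E_t = \min\{E_1,\ldots,E_W\}$ enters decisively: for $k \neq t$ it forces $E_k - E_t \geq 0$, and in fact one needs $E_k - E_t > 0$ (a strict minimum, or at least $E_k \neq E_t$) to get $\rho_j > 0$ and a genuine descent direction. Assuming the $E_k$ are distinct — which is implicit in the physics, as different wavelengths correspond to different lower-state energies — we get $\rho_j > 0$ for all $j$, completing the argument.

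The one point requiring attention is the treatment of $\bar x$ with $(f_k(\bar x))_j = d_j$ for some or all $j$: the definition of "descent direction" in the paper requires $g_k'(\bar x;v^k) < 0$ strictly, which fails if $f_k(\bar x) = d$ exactly (then $v^k = 0$). I expect the intended reading is that this case is either vacuous in practice or handled by the surrounding hypotheses (e.g., one works on the complement of the solution set, or the descent-pair definition is applied only where it is nonzero); I would add a sentence noting that when $v^k \neq 0$ — i.e., $f_k(\bar x) \neq d$ — the sum above is strictly negative. The main obstacle is thus not the calculus, which is routine, but (a) recognizing and exploiting the diagonality of $Jf_k$ to decouple the inner product, and (b) being precise about the strict-versus-nonstrict inequality and the role of the (strict) minimality of $E_t$; everything else is a direct computation.
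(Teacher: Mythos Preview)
Your proposal is correct and follows essentially the same route as the paper: compute the (diagonal) Jacobian of $f_k$ from the explicit componentwise formula, obtain $\nabla g_k(\bar x)$ via the chain rule, and then check that $\nabla g_k(\bar x)^T v^k$ is a negative-weighted sum of squares using $E_k-E_t>0$. Your treatment is in fact more careful than the paper's about the two edge cases you flag---the paper simply asserts $E_k-E_t>0$ for all $k\neq t$ (tacitly assuming the $E_k$ are distinct) and does not discuss the degenerate situation $v^k=0$.
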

\begin{proof}
The partial derivative of the $j$-th component of $f_{k}$ is given
by
\begin{equation}
\frac{\partial\left(f_{k}(x)\right)_{j}}{\partial x_{\ell}}=\left\{ \begin{array}{ll}
\frac{S_{k}}{S_{t}}\left(E_{k}-E_{t}\right)\frac{\left(f_{k}(x)\right)_{j}}{x_{j}^{2}}, & \;\text{if}\;\ell=j,\\
0, & \;\text{otherwise}.
\end{array}\right.
\end{equation}
Therefore, the gradient of $g_{k}$ at the point $\bar{x}\in\mathbb{R}_{++}^{M}$
is given by
\begin{equation}
\nabla g_{k}(\bar{x})=-\frac{S_{k}}{S_{t}}\left(E_{k}-E_{t}\right){\rm diag}\left(\left(\frac{(f_{k}(\bar{x}))_{j}}{\bar{x}_{j}^{2}}\right)_{j=1}^{M}\right)v^{k}.
\end{equation}
Hence, we have
\begin{equation}
\nabla g_{k}(\bar{x})^{^{T}}v^{k}=-\frac{S_{k}}{S_{t}}\left(E_{k}-E_{t}\right){\rm diag}\left(\left(\frac{(f_{k}(\bar{x}))_{j}}{\bar{x}_{j}^{2}}\right)_{j=1}^{M}\right)\|v^{k}\|^{2}<0,\label{e:dd}
\end{equation}
since $S_{k},S_{t}>0$, $E_{k}-E_{t}>0$ for all $k$ and all diagonal
elements of the matrix are positive.
\end{proof}
\begin{lemma}\label{l:dd2} Let $\bar{x},\bar{y}\in\mathbb{R}_{++}^{M}$.
The vector $w^{k}$ given by
\begin{equation}
w^{k}:=a^{k}-\beta_{k}(\bar{x},\bar{y})=a^{k}-{\rm diag}\left(\widetilde{\beta}_{k}(\bar{x})\right)\bar{y}
\end{equation}
is a descent direction at the point $\bar{y}$ for the function $h_{k}:\mathbb{R}_{++}^{M}\to\mathbb{R}_{+}$
defined by
\begin{equation}
h_{k}(y):=\frac{1}{2}\|a^{k}-\beta_{k}(\bar{x},y)\|^{2}={\displaystyle \frac{1}{2}\|a^{k}-{\rm diag}\left(\widetilde{\beta}_{k}(\bar{x})\right)y\|^{2},\quad\text{for all }y\in\mathbb{R}_{++}^{M}.}
\end{equation}
\end{lemma}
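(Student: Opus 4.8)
The plan is to mimic the computation in the proof of Lemma \ref{l:dd1}, now exploiting that $h_k$ is a quadratic function of $y$ because $\beta_k(\bar x, y) = {\rm diag}(\widetilde\beta_k(\bar x))\,y$ is \emph{linear} in $y$ (this is precisely Assumption \ref{asumpt:1}). Write $D_k := {\rm diag}(\widetilde\beta_k(\bar x))$, which is a fixed diagonal matrix with strictly positive diagonal entries since $\widetilde\beta_k$ maps into $\mathbb{R}_{++}^M$. Then $h_k(y) = \tfrac12\|a^k - D_k y\|^2$, so the gradient is $\nabla h_k(y) = -D_k^{T}(a^k - D_k y) = -D_k(a^k - D_k y)$, using $D_k^T = D_k$. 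Evaluating at $\bar y$ gives $\nabla h_k(\bar y) = -D_k\,w^k$, where $w^k = a^k - \beta_k(\bar x,\bar y) = a^k - D_k\bar y$.

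The second step is to form the directional derivative along $w^k$:
\begin{equation}
\nabla h_k(\bar y)^{T} w^k = -(D_k w^k)^{T} w^k = -(w^k)^{T} D_k w^k = -\sum_{j=1}^{M} (D_k)_{jj}\,(w^k_j)^2.
\end{equation}
Since every diagonal entry $(D_k)_{jj} = (\widetilde\beta_k(\bar x))_j > 0$, the right-hand side is strictly negative provided $w^k \neq 0$, which yields that $w^k$ is a descent direction for $h_k$ at $\bar y$ in the sense of the descent-direction definition. (If $w^k = 0$ then $\bar y$ already solves the $k$-th equation exactly and there is nothing to do; one may either exclude this case or note that the strict inequality in the definition is vacuous there — I would simply state the genuine descent claim under $w^k\neq 0$, matching how such pairs are used in Algorithm \ref{a:abstract}.)

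There is no real obstacle here: the only thing to be careful about is the transpose/symmetry bookkeeping for the diagonal matrix $D_k$ and making explicit that $D_k$ is \emph{constant} in $y$ (so $h_k$ really is quadratic and the gradient computation is the textbook one for $\tfrac12\|a^k - D_k y\|^2$), which is exactly where Assumption \ref{asumpt:1} enters. Compared to Lemma \ref{l:dd1}, this is the easier of the two, because there is no exponential nonlinearity and no chain rule through $\widetilde\beta_k$ — the dependence that matters is linear by hypothesis. I would keep the proof to the three displayed lines above.
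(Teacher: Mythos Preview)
Your proof is correct and follows essentially the same route as the paper: compute $\nabla h_k(\bar y) = -{\rm diag}(\widetilde\beta_k(\bar x))\,w^k$ and take the inner product with $w^k$, using positivity of the diagonal entries. Your version is in fact slightly more careful, writing the quadratic form $-\sum_j (D_k)_{jj}(w^k_j)^2$ explicitly and flagging the degenerate case $w^k=0$, both of which the paper glosses over.
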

\begin{proof}
The gradient of $h_{k}$ at the point $\bar{y}$ is given by
\begin{equation}
\nabla h_{k}(\bar{y})=-{\rm diag}\left(\widetilde{\beta}_{k}(\bar{x})\right)w^{k}.
\end{equation}
Thus, we have
\begin{equation}
\nabla h_{k}(\bar{y})^{T}w^{k}=-{\rm diag}\left(\widetilde{\beta}_{k}(\bar{x})\right)\|w^{k}\|^{2}<0,
\end{equation}
where the last strict inequality holds since $\widetilde{\beta}_{k}:\mathbb{R}_{++}^{M}\to\mathbb{R}_{++}^{M}$.
\end{proof}
The above analysis shows that the system of equations \eqref{e:systTAS}
fulfills Assumptions~\ref{asumpt:1} and~\ref{assumpt:2}, making
Algorithm \ref{a:abstract} a good choice for handling it. In Section
\ref{sect:TAS4} we provide a numerical demonstration of its good
performance.

\subsection{Improving the descent pairs algorithm by superiorization\label{subsec:Improving-the-descent}}

The iterative nature of Algorithm \ref{a:abstract} enables us to
introduce a priori conditions into the iterative process. We do this
via the superiorization methodology.

\subsubsection{The superiorization methodology\label{subsec:superiorization}}

The superiorization methodology \cite{herman2012superiorization}
works by taking an iterative algorithm, investigating its \textbf{perturbation
resilience}, and then, using proactively such permitted perturbations,
it forces the perturbed algorithm to do something useful in addition
to what it was originally designed to do. The original unperturbed
algorithm is called the \textbf{basic algorithm} and the perturbed
algorithm is called the \textbf{superiorized version of the basic
algorithm}.

When the basic algorithm is computationally efficient and useful in
terms of the application at hand, and the perturbations are simple
and not expensive to calculate, then the advantage of this methodology
is that, for essentially the computational cost of the basic algorithm,
we are able to get something more by steering its iterates according
to the perturbations. A detailed description of the superiorization
methodology along with pertinent up-to-date references can be found
in several papers, see, e.g., \cite{DFS2019}, \cite{Herman2019-degruyter}

This general principle has been successfully used in a variety of
important practical applications, see the recent papers in the, compiled
and continuously updated, bibliography of scientific publications
on the superiorization methodology and perturbation resilience of
algorithms \cite{sup-bib}, where many applications oriented works
with the method appear, e.g., \cite{multi-cast-2021}.

In the language of \cite{Censor-weak-2015}, the superiorization methodology
allows us to employ a given function, called \textbf{target function},
$\varphi:\mathbb{R}^{M}\rightarrow\mathbb{R}.$ It interlaces into
the iterations of a basic algorithm steps that perform locally reductions
of the target function, these steps are called \textbf{perturbations}.
The resulting \textbf{superiorized version of the basic algorithm}
is expected to retain the convergence properties of the basic (unperturbed)
algorithm but, additionally, steer the process to an output with reduced,
not necessarily minimized, value of the target function.

We use this methodology to superiorize the descent pairs algorithm
(Algorithm \ref{a:abstract}), as we describe below.

\subsubsection{Implementation of the superiorized version of the descent pairs algorithm\label{subsec:Implement}}

In our implementations, priors are applied to Algorithm \ref{a:abstract}
according to the superiorization methodology. Corresponding target
functions $\varphi$, mentioned above, to lead the function reduction
perturbations in the superiorization process are the priors defined
by \cite{Shui:21}. These are the \textbf{total variation} (TV) function
and the \textbf{smoothness} (Tikhonov, Tik for short) function \cite{DAUN2010105},
defined, respectively, by

\begin{equation}
\psi_{TV}(Z):=\sum_{i,j}\sqrt{(Z(i,j)-Z(i+1,j))^{2}+(Z(i,j)-Z(i,j+1))^{2}},\label{eq:TV}
\end{equation}

\noindent and

\begin{equation}
\psi_{Tik}(Z):=\mathop{\sum_{i,j}}\left(Z(i,j)-\frac{1}{rn}\mathop{\sum_{ii,jj}}\left(Z(ii,jj)-Z(i,j)\right)\right)^{2},\label{Tik}
\end{equation}
where $Z$ represents one of the properties $x$ or $y$, $\{(i,j)\}_{i=1,j=1}^{n,n}$
is a set of pairs of indices of pixels in an exact grid in the ROI
(see Figure \ref{fig:1}), $rn$ is the number of pixels that surround
pixel $(i,j)$, and $(ii,jj)$ are the indices of these pixels. The
pseudo-code of this method applied to \eqref{e:systTAS} is shown
in Algorithm \ref{a:SUPAFP}. In the experimental runs the function
$\varphi$ in the algorithm will be either $\psi_{TV}(Z)$ or $\psi_{Tik}(Z)$
of (\ref{eq:TV}) or (\ref{Tik}), respectively. Although in the experiments
only the above two functions are chosen as target functions, and thus
gradients are involved in the pseudo-code below, it should be emphasized
that in general any target function can be considered and function
reduction is not necessarily via derivatives \cite{DFS2019,censor2019derivative,gibali2018dc}.
To avoid zero as the divisor when taking gradient for the TV function,
we add a small constant $10^{-5}$ under the square root sign in our
application.\smallskip{}

{\SetAlgoNoLine
\begin{algorithm}
\textbf{Initialization:} Set $\lambda_{x}$, $\lambda_{y}>0$, $\eta_{x}$
and $\eta_{y}$. Set the index $t$ such that $E_{t}=\min\left\{ E_{1},E_{2},\ldots,E_{W}\right\} $
and choose $x^{0}$ and $y^{0}$ in $\mathbb{R}_{+}^{M}$\; set {$\ell=0$}\;
\Repeat{ set {$x^{\ell,0}=x^{\ell}$}\; \For{$q=1,2,\ldots,W$}{
set {$v_{x}:=-\frac{\nabla\varphi(x^{\ell,q-1})}{\|\nabla\varphi(x^{\ell,q-1})\|}$}\;
\While{$\varphi(x^{\ell,q-1}+\beta_{x}v_{x})>\varphi(x^{\ell,q-1})$}{
set {$\eta_{x}=\gamma\eta_{x}$}\; } set {$z_{x}=x^{\ell,q-1}+\eta_{x}v_{x}$}\;
set {$x^{\ell,q}=z_{x}+\lambda_{x}\left({\rm diag}(a^{t})^{-1}a^{q}-{\rm diag}\left(\widetilde{\beta}_{t}(z_{x})\right)^{-1}\widetilde{\beta}_{q}(z_{x})\right)$}\;
} set {$x^{\ell+1}=x^{\ell,W}$}\; set {$y^{\ell,0}=y^{\ell}$}\;
\For{$q=1,2,\ldots,W$}{ set {$v_{y}:=-\frac{\nabla\varphi(y^{\ell,q-1})}{\|\nabla\varphi(y^{\ell,q-1})\|}$}\;
\While{$\varphi(y^{\ell,q-1}+\beta_{y}v_{y})>\varphi(y^{\ell,q-1})$}{
set {$\eta_{y}=\gamma\eta_{y}$}\; } set {$z_{y}=y^{\ell,q-1}+\eta_{y}v_{y}$}\;
set {$y^{\ell,q}=z_{y}+\lambda_{y}\left(a^{q}-\beta_{q}(x^{\ell+1},z_{y})\right)$};
} set {$y^{\ell+1}=y^{\ell,W}$}\; } \caption{The Superiorized version of the descent pairs algorithm (Algorithm
\ref{a:abstract}), nicknamed SUP-DPA, applied in multi-spectral TAS.}
\label{a:SUPAFP}
\end{algorithm}

In this algorithm, $\lambda$ is the step length of the DPA iteration
and $\eta$ is the step length for superiorization, which is contracted
according to $\gamma$ as the iteration processes. The value of $\gamma$
and the initial value of $\eta$ influence the impact of the superiorization
together. We denote by $v$ the normalized gradient of the function
$\varphi$ at the current iteration and $z$ is an intermediate variable
between the superiorization and the DPA iteration.

\subsection{A numerical experiment\label{subsec:experiment}}

\label{sect:TAS4} In this subsection, Algorithms~\ref{a:abstract},
\ref{alg:NF} and \ref{a:SUPAFP} are nicknamed as Algorithm DPA,
NF and SUP-DPA, respectively. We conducted simulations on numerous
temperature and concentration phantoms, two of which are shown here
as demonstrative examples. Phantom 1 mimics the temperature and H$_{2}$O
concentration in a two-dimensional McKenna flame. Phantom 2 imitates
smooth temperature and concentration distributions consisting of two
Gaussian peaks.

The square ROI was divided into 40\texttimes 40 pixels grids and 160
beams were employed, which were distributed uniformly and parallelly
in four directions, $0^{\circ}$, $45^{\circ}$, $90^{\circ}$ and
$135^{\circ}$. Ten discrete wavelengths were selected from the H$_{2}$O
absorption spectrum.

In order to observe the progress and behavior of the DPA algorithm
(Algorithm~\ref{a:abstract}) for solving the second stage of the
multi-spectral TAS problem, as explained in Subsection \ref{Sect:TAS2}
above, we compared its performance with the NF algorithm (Algorithm
\ref{alg:NF}). In our implementation with MATLAB, the nonlinear least-squares
solver, i.e., the function ``lsqnonlin'', was applied as the NF
algorithm, wherein MATLAB's trust-region algorithm was employed in
this function to find the solution. For the solution of the first
stage we employed the superiorized ART algorithm \cite{4407758,Davidi2009perturbation}.

To mimic practical situations, noise was added to the absorbance measurements.
Uniform noise was used in our simulations, defined as
\begin{equation}
b_{i;mes}^{k}:=b_{i;ori}^{k}\cdot(1+rand\cdot u),
\end{equation}
where the subscript $ori$ refers to the original absorbance calculated
from the phantom without noise, while $mes$ refers to the practical
absorbance with noise added; $u$ is the noise level; $rand$ is a
random number in the interval $(-1,1)$.

We tested our algorithms on two different phantoms, Phantom 1 and
Phantom~2. For both phantoms, the DPA algorithm was initialized with
$\lambda_{x}=1000$, $\lambda_{y}=2$, and $x^{0}$ and $y^{0}$ randomly
obtained by picking their components between some lower and upper
bounds. These bounds were set, for all $j=1,2,\ldots,M$, as $x_{j}^{0}\in(400,2000)$
and $y_{j}^{0}\in(0.005,0.2)$ in the algorithmic runs for the reconstruction
of Phantom~1, while for all $j=1,2,\ldots,M$, $x_{j}^{0}\in(800,2400)$
and $y_{j}^{0}\in(0.005,0.2)$ in the algorithmic runs for the reconstruction
of Phantom~2.

For the SUP-DPA algorithm (Algorithm \ref{a:SUPAFP}), in the reconstruction
of Phantom~1, TV was employed as the prior, i.e., as the function
$\varphi$ in the algorithm. The SUP-DPA algorithm was initialized
with $\lambda_{x}=1000$, $\lambda_{y}=2$, $\beta_{x}=5\times10^{6}$,
$\beta_{y}=10$ and $\gamma=0.999$. Initial guesses of $x^{0}$ and
$y^{0}$ were vectors randomly chosen in the intervals described above
for the DPA algorithm.

For the reconstruction of Phantom 2 with the SUP-DPA algorithm, smoothness
was regarded as the prior, thus, the function $\varphi$ in the algorithm
was chosen as $\psi_{Tik}$ of (\ref{Tik}) and the algorithm was
initialized with $\lambda_{x}=1000$, $\lambda_{y}=2$, $\beta_{x}=5\times10^{4}$,
$\beta_{y}=10$ and $\gamma=0.999$. Initial guesses of $x^{0}$ and
$y^{0}$ were vectors randomly picked in the intervals described above
for the DPA algorithm.

All runs of the DPA and the SUP-DPA algorithms were stopped when either
the number of iterations exceeded 50 or when $\sum_{k=1}^{W}\|b^{k}-\beta_{k}(x^{\ell+1},y^{\ell+1})\|<10^{-3}$.
Figure \ref{fig:stop} shows the relation between relative error in
$x$ and the stopping criterion function $\sum_{k=1}^{W}\|b^{k}-\beta_{k}(x^{\ell+1},y^{\ell+1})\|$
as the iterations proceed for DPA and SUP-DPA. The relative errors
are defined as

\begin{equation}
\begin{cases}
Error_{x}:=\frac{\left\Vert x_{rec}-x_{act}\right\Vert _{2}}{\left\Vert x_{act}\right\Vert _{2}},\\
Error_{y}:=\frac{\left\Vert y_{rec}-y_{act}\right\Vert _{2}}{\left\Vert y_{act}\right\Vert _{2}},
\end{cases}\label{eq:error}
\end{equation}
where the subscripts ``rec'' and ``act'' stand for reconstructed
value and actual value, respectively.

It can be seen that after the stopping criterion function value fell
below $10^{-3}$, the relative error hardly changes, indicating that
a convergence is reached. The relative error of SUP-DPA had a slight
fluctuation at the left end of the curve because of the perturbation
introduced. On the other hand, 50 iterations is a large enough number
for this testing case to assume convergence when the stopping criterion
function value cannot decrease to $10^{-3}$. For the NF algorithm,
the function tolerance and optimality tolerance were both set to $5\times10^{-10}$.
When implementing the NF algorithm, $\delta$ was initially set as
the 2-norm of the difference between the upper and lower bounds, ($(800,2400)$
for $x$ and $(0.005,0.2)$ for $y)$, and the adjustment of $\delta$
followed the default settings in the MATLAB function ``lsqnonlin''.
The algorithm was terminated when $\delta<5\times10^{-10}$.

\begin{figure}[H]
\centering\includegraphics[scale=0.45]{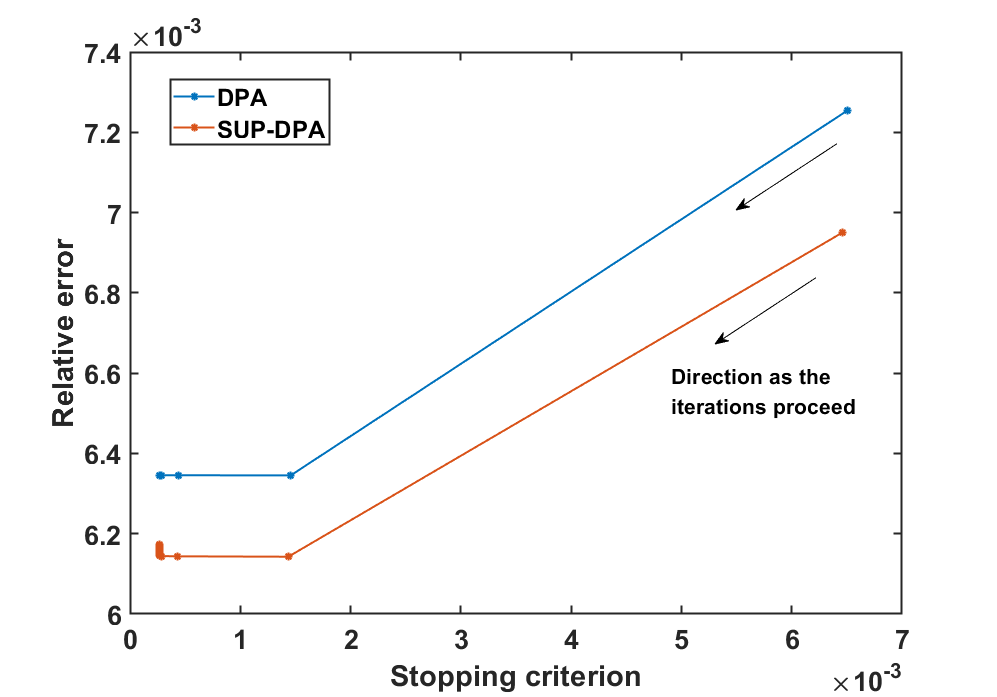}\caption{Relation between relative error and stopping criterion as the iterations
proceed.}

\label{fig:stop}
\end{figure}

Results are shown in Figures~\ref{fig:2} and~\ref{fig:3} and Table~\ref{table1}.
The name of the algorithms is shown at the top of the corresponding
reconstructed profiles. The relative reconstruction errors are also
labeled, which are defined by (\ref{eq:error}). To be specific, the
unit of temperature $x$ was $K$, which is labeled beneath its color-bar
in the figures, and for the mole fraction $y$, the unit is one, thus,
no unit was labeled.

\begin{table}[H]
\caption{Computation times of the algorithms}
\label{table1} \centering{}%
\begin{tabular}{|c|c|c|c|}
\hline
Computation time (sec)  &
DPA  &
SUP-DPA  &
NF\tabularnewline
\hline
\hline
Phantom 1  &
0.48  &
0.56  &
9.16\tabularnewline
\hline
Phantom 2  &
0.48  &
0.54  &
7.67\tabularnewline
\hline
\end{tabular}
\end{table}

Figure \ref{fig:2} shows Phantom 1 and the results recovered by the
DPA, the SUP-DPA and the NF algorithms. Temperature and concentration
profiles generated by the three algorithms show similar shapes to
those of Phantom 1, although there are some defects at the edges of
the profiles. The reconstruction errors of SUP-DPA is smaller than
those of the DPA and the NF algorithm. Besides, it takes much less
time for the DPA and the SUP-DPA algorithms to finish the reconstruction
than it took for the NF algorithm. Overall, the SUP-DPA algorithm
can be regarded as the best among the three algorithms, for the particular
experiments that we conducted and report here.

\begin{figure}[ht!]
\centering\includegraphics[scale=0.45]{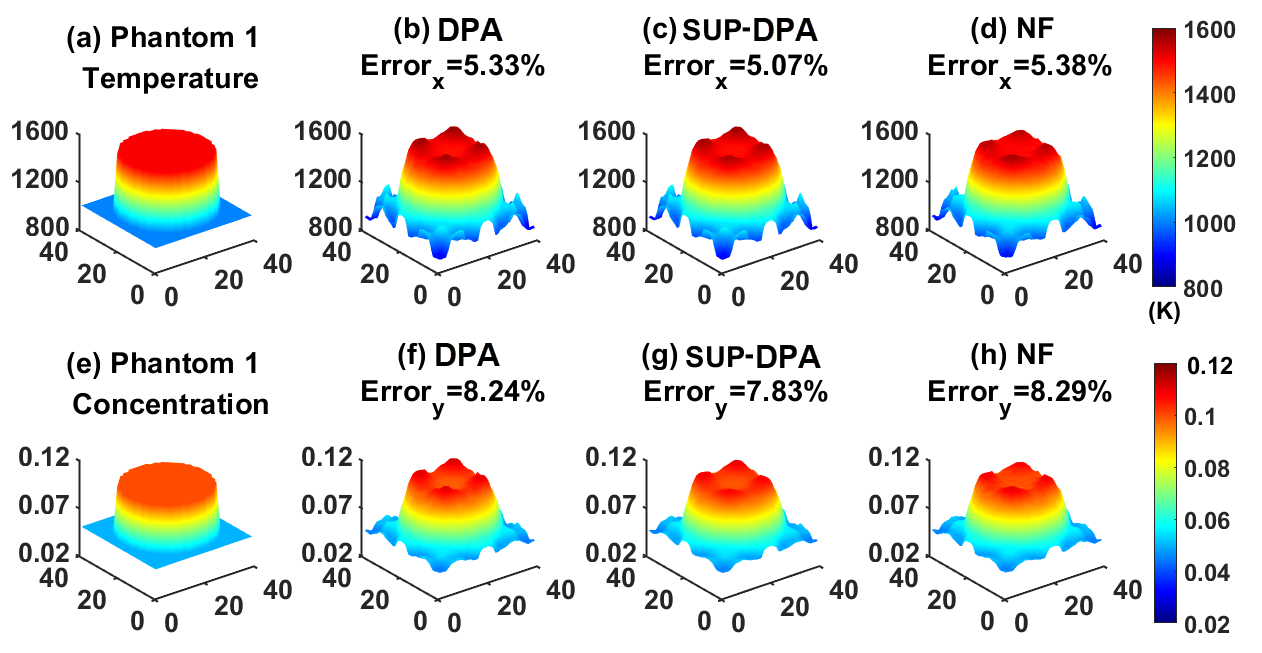}

\caption{(a) Temperature profile of Phantom 1. Temperature profiles recovered
by (b) the DPA algorithm, (c) the SUP-DPA algorithm, and (d) the NF
algorithm. (e) Concentration profile of Phantom 1. Concentration profiles
recovered by (f) the DPA algorithm, (g) the SUP-DPA algorithm, and
(h) the NF algorithm.}
\label{fig:2}
\end{figure}

Figure \ref{fig:3} shows Phantom~2 and the results recovered by the
DPA, the SUP-DPA and the NF algorithms. Results by the three methods
show a similar shape to Phantom~2. The error of the DPA algorithm
is slightly smaller than that of the NF algorithm, while the SUP-DPA
algorithm improves the error of both other algorithms. Besides, the
computational time of the SUP-DPA algorithm is much smaller than that
of the NF algorithm. Though the DPA algorithm has the lowest computation
time, its reconstruction effect is worse than that of the SUP-DPA
algorithm. Therefore, the SUP-DPA algorithm can still be regarded
advantageous over the DPA and the NF algorithms.

\begin{figure}[ht!]
\centering \includegraphics[scale=0.45]{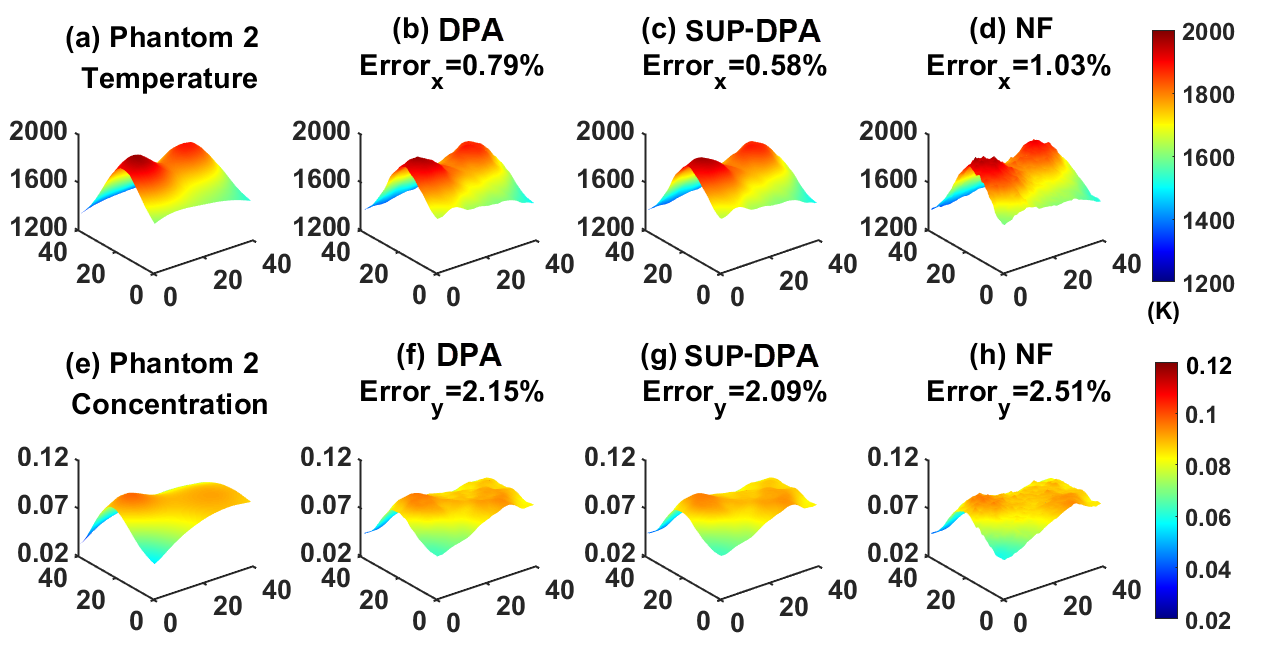}\caption{(a) Temperature profile of Phantom 2. Temperature profiles recovered
by (b) the DPA algorithm, (c) the SUP-DPA algorithm, and (d) the NF
algorithm. (e) Concentration profile of Phantom 2. Concentration profiles
recovered by (f) the DPA algorithm, (g) the SUP-DPA algorithm, and
(h) the NF algorithm.}
\label{fig:3}
\end{figure}

To further investigate the performance of the algorithms, simulation
studies were conducted under different spatial resolutions. For Phantom
2, reconstructions were conducted when it was divided into $20\times20$,
$40\times40$, $60\times60$ and $80\times80$ pixel grids, denoted
as ``gridding scale'' 20, 40, 60 and 80, respectively. To ensure
the comparability of the results, the measurement of each case was
conducted in four directions, while the number of beams in each direction
increased correspondingly to the gridding scale, which means that
for a $G\times G$ grid, $4\times G$ measurement beams were applied.
Other conditions, including noise level, parameters and stopping criteria,
were the same as those described in Figure \ref{fig:3}. The results
are shown in Figure \ref{fig:grid}.

As the pixel grid was made finer, the computation time grew exponentially,
indicating an increasing computational efficiency improvement of DPA
and SUP-DPA compared with NF. In each simulation condition, DPA and
SUP-DPA had better performance than NF, with SUP-DPA smaller than
DPA in relative error and DPA slightly smaller in computation time.
At gridding scale 20, there were the largest errors. This might be
due to less measurement beams. Coarser gridding made the phantom less
smooth, which could result in difficulties for the reconstruction.

\begin{figure}[H]
\centering\includegraphics[scale=0.32]{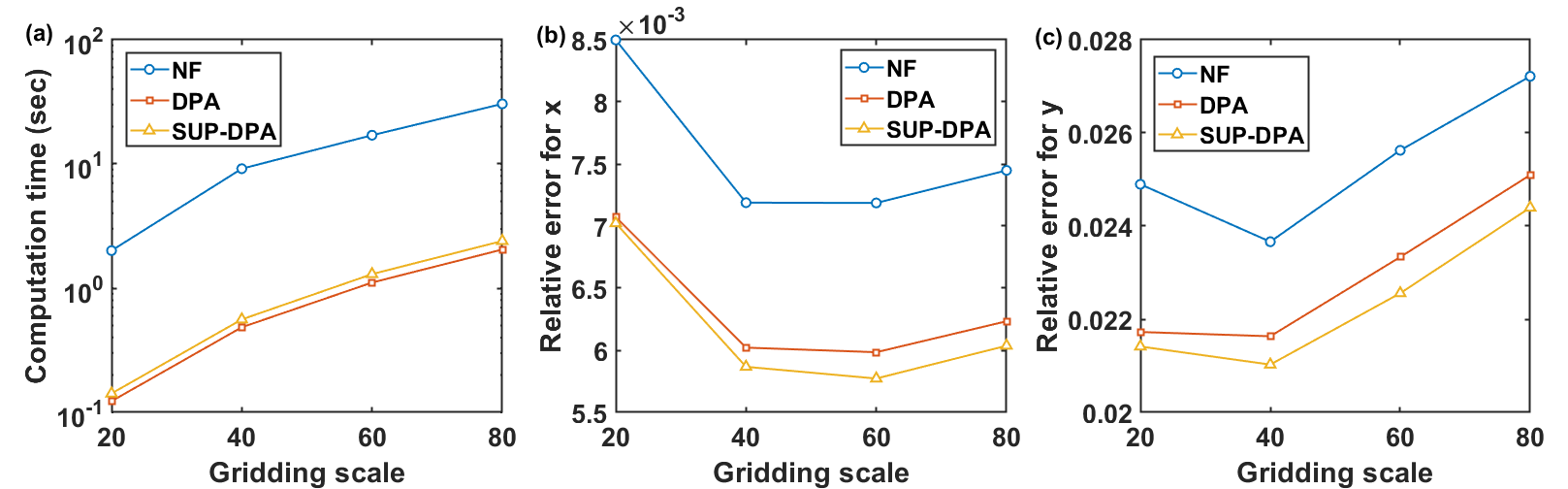}\caption{(a) Computation time, (b) reconstruction error for $x$, and (c) reconstruction
error for $y$, with respect to the gridding scale.}

\label{fig:grid}
\end{figure}

\bigskip{}

\paragraph{Acknowledgements}

F.J.A.A. and D.T.B. were partially supported by Grants PGC2018-097960-B-C22
and PID2022-136399NB-C21 funded by ERDF/EU and by
MICIU/AEI/ 10.13039/501100011033.
D.T.B was supported by Grant PRE2019-09075 funded by
MICIU/AEI/10.13039/501100011033 and by ``ESF Investing in your future''. F.J.A.A. was
partially supported by the Generalitat Valenciana (AICO/2021/165).
The work of Y.C. was supported by the ISF-NSFC joint research program
grant No. 2874/19, by the U.S. National Institutes of Health (NIH)
Grant No. R01CA266467 and by the Cooperation Program between the German
Cancer Research Center (DKFZ) and Israel’s Ministry of Innovation,
Science and Technology (MOST). W. C. and C. S. were partially supported
by the National Natural Science Foundation of China (Grant No. 51976122,
52061135108).

\section*{Declarations}

\paragraph{Conflict of interest}

The authors declare no competing interests. \newpage{}

\bibliographystyle{plain}
\bibliography{afp-revised-040524}

\end{document}